\documentclass[11pt]{article}

\usepackage[T1]{fontenc}
\usepackage[utf8]{inputenc}
\usepackage{microtype}

\usepackage{amsmath,amsthm,amssymb,mathtools}
\usepackage{bm}
\usepackage{enumitem}
\usepackage[dvipsnames]{xcolor}
\usepackage{hyperref}
\hypersetup{
  colorlinks=true,
  linkcolor=blue!50!black,
  citecolor=blue!50!black,
  urlcolor=blue!50!black,
  pdftitle={A Refined Parameter Condition in the Lyapunov Analysis of IGAHD},
  pdfauthor={Samir Adly},
  pdfsubject={IGAHD, Hessian-driven damping, and discrete Lyapunov analysis},
  pdfkeywords={IGAHD, Hessian-driven damping, accelerated optimization,
    Lyapunov function, smooth convex interpolation}
}
\usepackage[a4paper,margin=0.8in]{geometry}
\usepackage{booktabs}
\usepackage{tabularx}
\usepackage{xspace}

\theoremstyle{plain}
\newtheorem{theorem}{Theorem}[section]
\newtheorem{proposition}[theorem]{Proposition}

\newtheorem{corollary}[theorem]{Corollary}
\theoremstyle{definition}
\newtheorem{remark}[theorem]{Remark}

\usepackage{titlesec}
\titlespacing*{\section}
  {0pt}
  {1.8ex plus 0.4ex minus 0.2ex}
  {0.8ex plus 0.2ex}

\titlespacing*{\subsection}
  {0pt}
  {1.4ex plus 0.3ex minus 0.2ex}
  {0.6ex plus 0.2ex}
\newcommand{\R}{\mathbb{R}}
\newcommand{\Hc}{\mathcal H}
\newcommand{\norm}[1]{\left\lVert #1\right\rVert}
\newcommand{\inner}[2]{\left\langle #1,#2\right\rangle}
\newcommand{\IGAHD}{\textup{IGAHD}\xspace}
\DeclareMathOperator*{\argminop}{arg\,min}
\newcolumntype{Y}{>{\raggedright\arraybackslash}X}

\AtBeginDocument{
  \setlength{\abovedisplayskip}{6pt plus 2pt minus 2pt}
  \setlength{\belowdisplayskip}{7pt plus 2pt minus 2pt}
  \setlength{\abovedisplayshortskip}{4pt plus 1pt}
  \setlength{\belowdisplayshortskip}{6pt plus 1pt}
}

\makeatletter
\renewenvironment{abstract}
  {\par\smallskip\small\noindent\textbf{Abstract. }\ignorespaces}
  {\par\medskip}
\makeatother

\title{A Refined Parameter Condition in the Lyapunov Analysis of IGAHD}
\author{
Samir Adly\thanks{Laboratoire XLIM, Universit\'e de Limoges,
123 avenue Albert Thomas, 87060 Limoges CEDEX, France.
Email: \href{mailto:samir.adly@unilim.fr}{samir.adly@unilim.fr},
\url{https://www.unilim.fr/pages_perso/samir.adly/}.}
}
\date{}

\begin{document}
\maketitle
\vskip -8mm
\begin{abstract} In a recent paper, Attouch, Chbani, Fadili and Riahi introduced the inertial gradient algorithm with Hessian-driven daming, called (IGAHD). Under the condition $0\leq\beta<2\sqrt{s}$, where $\beta$ is the Hessian driven damping parameter and $s>0$ is the gradient step size, their Lyapunov analysis shows an accelerated estimate of the objective function as well as a weighted summability of the gradient for the case $\beta>0$. For any fixed value of $\beta>0$, this condition excluded sufficiently small step sizes $s>0$. The present note refines one of the estimates in this Lyapunov analysis by retaining two coefficients that were previously replaced by lower bounds. This leads to the following less restrictive and sufficient condition:
$$0<\beta L\sqrt{s}<1+\sqrt{1+sL(1-sL)},$$
where $L>0$ is the constant Lipschitz of the gradient objective function and $0<s\leq1/L$. In particular, the simplest condition $0<\beta <\frac{2}{L\sqrt{s}}$ is sufficient. For fixed values of $\beta>0$ and $L>0$, this new condition is satisfied for sufficiently small $s>0$. Consequently, under this new refined sufficient condition, the conclusions in the original paper stay valid. 
For convex quadratic functions in finite dimensions, a separate spectral analysis leads to a larger region parameters. It gives a necessary and sufficient condition for Schur stability of the limit modal matrices, as well as the geometrical decay of the objective residuals and the gradients. This modal analysis raises the question of whether a different Lyapunov function would allow us to recover part (or all) of this extended spectral domain for non-quadratic objective functions.
\end{abstract}

\noindent\textbf{Keywords.} Inertial gradient algorithm; Hessian-driven damping; Lyapunov analysis; accelerated optimization; smooth convex interpolation; Schur stability.

\noindent\textbf{Mathematics Subject Classification.}
Primary 65K05; Secondary 90C25, 37N40.

\tableofcontents
%=====================================================
\section{Introduction}\label{sec:introduction}
%=====================================================
For a smooth objective function $f$ in a real Hilbert space $\Hc$ and a trajectory $t\mapsto x(t)$, the Hessian-driven damping with coefficient $\beta>0$ involves the term $\beta\nabla^2 f(x(t))\dot{x}(t)=\beta \frac{d}{dt}\nabla f(x(t))$ in the associated continuous time dynamical system. This can be interpreted as a  geometrical damping, and is known to reduce the oscillations induced by the inertia while preserving a first order discretization optimization algorithm. In fact, its discretization is based on the difference of two successive gradients instead of the explicit evaluation of the Hessian matrix at the current point. This observation led Attouch, Chbani, Fadili  and Riahi~\cite{AFCR2022} to introduce and study the Inertial Gradient Algorithm with Hessian Damping (IGAHD).

The construction of the algorithm \IGAHD, the auxiliary sequence, the Lyapunov function, and the cancellation identity used below are due to \cite{AFCR2022}. 
The contribution of this note regarding the Lyapunov analysis is limited to refining an estimate in the original proof. A separate spectral analysis is proposed for finite-dimensional convex quadratic functions to compare the two conditions.

For a convex function with an $L$-Lipschitz gradient, the analysis in \cite[Theorem~6]{AFCR2022} establishes the estimate $f(x_k)-\min_{\mathcal{H}}f=O(k^{-2})$ under the conditions $sL\leq 1$, $\alpha\geq 3$, and $0\leq\beta<2\sqrt{s}$. This sufficient condition on $\beta$ is incompatible with small step sizes $s>0$ when $\beta>0$ is fixed. Indeed, if $s=h^2$, this condition becomes $\beta<2h$ and consequently fails for any $h\leq\beta/2$. The original paper \cite{AFCR2022} explicitly left open the status of this restriction: ``It is an open question whether this constraint is a technical artifact or is fundamental to acceleration. We leave it to a future work.'' \cite[Remark~3]{AFCR2022}.

The original restriction is also used in subsequent works. The condition $\beta<2\sqrt{s}$ is for example recalled in \cite[Section~4]{ADHLR2023}. The authors then numerically explore the choices $\alpha=3$ and $\beta\in\{100,300,500\}/\sqrt{L}$, explicitly noting that this regime is not covered by available convergence results.

The section of this note devoted to the Lyapunov approach focuses on the role of this condition in the proof. A less restrictive sufficient condition arises from the same Lyapunov construction when two intermediate coefficients are kept at their exact values. Proposition~\ref{prop:gradient-step} retains two coefficients that had been replaced by $s/2$ in the original estimate for the smooth convex case \cite[Appendix~A.1, Lemma~1]{AFCR2022}. 
The resulting dissipation is governed by a family of symmetric $2\times2$ matrices. Under the condition
$$
(\beta L\sqrt{s}-1)^2<1+sL(1-sL),
$$
the normalized matrices converge to a positive definite limit. Consequently, the finite-index matrices are positive definite for all sufficiently large indices.

For $\beta>0$, this is equivalent to
\begin{equation}\label{new-condition-beta}
0<\beta L\sqrt{s}<1+\sqrt{1+sL(1-sL)}.
\end{equation}
Note that a simpler sufficient condition than \eqref{new-condition-beta} is given by $0<\beta L\sqrt{s}<2$.
Thanks to this single modification, the Lyapunov argument from \cite{AFCR2022} yields the same asymptotic conclusions over a wider parameter range. The estimate on the objective function and the weighted summability of gradients constitute the conclusions of \cite[Theorem~6]{AFCR2022}; the bounds on the best iterate already appear in \cite[Remark~4]{AFCR2022}. We restate them below solely to clarify the consequences of the refined condition on the parameters.

For $\alpha > 3$, the refined analysis also gives the weighted summability of the objective function required in \cite{ACFR2023}. As noted in Remark \ref{rem:acfr2023-extension}, the arguments from Theorems 4.2 to 4.4 of \cite{ACFR2023} then allow for obtaining the same rate estimates, the weak convergence of $(x_k)$ to a point in $X^\star$, and the other asymptotic conclusions under the new condition \eqref{new-condition-beta}. The convergence arguments themselves remain those of \cite{ACFR2023}.

The refined Lyapunov condition is sufficient. No claim regarding its necessity is made for the full class of smooth convex functions. In the case of convex quadratic objective functions in finite dimensions, the spectral analysis presented in Subsection \ref{subsec:quadratic} allows for extending the parameter range, including the case $\alpha=3$, and establishes geometrical decay of the objective function residual and the gradients. The statement concerning necessity applies only to the Schur stability of the limiting modal matrices.

The IGAHD algorithm has been also compared to Nesterov's accelerated gradient method and the Ravine-type accelerated gradient method. The analysis presented in \cite{AAF2024} draws on developments regarding high and super resolution ordinary differential equations (ODEs) and includes some numerical experiments. Accelerated restart strategies for the underlying dynamics with Hessian-driven damping, as well as algorithmic illustrations for IGAHD, are investigated in \cite{MaulenPeypouquet2023}. These worsk address issues distinct from the parameter analyses considered here.

The note is organized as follows. Section~\ref{sec:setting} recalls the algorithm and states the refined Lyapunov condition. Section~\ref{sec:local-estimate} establishes the one-step estimate with its exact coefficients. Section~\ref{sec:lyapunov} calculates the Lyapunov increment, and Section~\ref{sec:positivity} examines the resulting quadratic form.
Section~\ref{sec:comparison} first compares the conditions regarding the Lyapunov parameters. It then presents a spectral analysis specific to convexe quadratic objectives.
Section~\ref{sec:scope} examines limiting cases and specifies the domain of validity for both arguments.
%=====================================================
\section{Setting and refined parameter condition}\label{sec:setting}
%=====================================================
Let $\Hc$ be a real Hilbert space, and let $f:\Hc\to\R$ be convex and
Fr\'echet differentiable. We assume that $\nabla f$ is $L$-Lipschitz
continuous for some $L>0$ and that
$$
X^\star:=\argminop_{\Hc}f\neq\varnothing,
\qquad
f^\star:=\min_{\Hc}f.
$$
Fix $\alpha\geq3$, $0<s\leq1/L$, and $\beta\geq0$. Starting from
$x_0,x_1\in\Hc$, \IGAHD is defined by
\begin{equation}
\label{eq:igahd}
\left\{
\begin{aligned}
a_k&=1-\frac{\alpha}{k},\\
y_k&=x_k+a_k(x_k-x_{k-1})
-\beta\sqrt{s}\bigl(\nabla f(x_k)-\nabla f(x_{k-1})\bigr)
-\frac{\beta\sqrt{s}}{k}\nabla f(x_{k-1}),\\
x_{k+1}&=y_k-s\nabla f(y_k),
\end{aligned}
\right.
\qquad k\geq1.
\end{equation}

We use the following dimensionless quantities:
\begin{equation}
\label{eq:dimensionless}
q:=sL\in(0,1],
\qquad
\delta:=\beta L\sqrt{s},
\qquad
C_q:=1+q(1-q).
\end{equation}
Define
\begin{equation}
\label{eq:tau}
\tau_k:=\frac{k-1}{\alpha-1},
\end{equation}
and, for a fixed $x^\star\in X^\star$,
\begin{equation}
\label{eq:vk}
v_k:=x_{k-1}-x^\star
+\tau_k\bigl(x_k-x_{k-1}+\beta\sqrt{s}\nabla f(x_{k-1})\bigr).
\end{equation}
The Lyapunov function is defined by
\begin{equation}
\label{eq:energy}
\mathcal E_k
:=\tau_k^2\bigl(f(x_k)-f^\star\bigr)+\frac{1}{2s}\norm{v_k}^2.
\end{equation}
These weights, this auxiliary vector, and this energy are the ones used in the proof of \cite[Theorem~6]{AFCR2022}.

The refined condition gives the same convergence properties on a larger parameter region. Apart from this larger sufficient region and the explicit
statement of eventual monotonicity, the conclusions below are those obtained in \cite[Theorem~6 and Remark~4]{AFCR2022}.

The following theorem states the precise condition used in the rest of the note.

\begin{theorem}
\label{thm:main}
Let $(x_k)$ be generated by \eqref{eq:igahd}, with $\alpha\geq3$, $0<s\leq1/L$, and $\beta>0$. Suppose that
\begin{equation}
\label{eq:main-condition}
0<\beta L\sqrt{s}<1+\sqrt{1+sL(1-sL)}.
\end{equation}
Then the energy $(\mathcal E_k)$ is nonincreasing from some index onward and
\begin{equation}
\label{eq:objective-rate}
f(x_k)-f^\star=O(k^{-2}).
\end{equation}
Moreover,
\begin{equation}
\label{eq:gradient-summability}
\sum_{k=1}^{+\infty}k^2\norm{\nabla f(x_k)}^2<+\infty,
\qquad
\sum_{k=1}^{+\infty}k^2\norm{\nabla f(y_k)}^2<+\infty.
\end{equation}
Consequently,
\begin{equation}
\label{eq:gradient-little-o}
\norm{\nabla f(x_k)}=o(k^{-1}),
\qquad
\norm{\nabla f(y_k)}=o(k^{-1}),
\end{equation}
and
\begin{equation}
\label{eq:best-iterate}
\min_{1\leq i\leq k}\norm{\nabla f(x_i)}^2=O(k^{-3}),
\qquad
\min_{1\leq i\leq k}\norm{\nabla f(y_i)}^2=O(k^{-3}).
\end{equation}
\end{theorem}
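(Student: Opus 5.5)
The plan follows the Lyapunov argument of \cite[Theorem~6]{AFCR2022} and changes only the one-step estimate. The first step is a descent inequality for the gradient step $x_{k+1}=y_k-s\nabla f(y_k)$. From the smooth convex interpolation inequality, for any $z\in\Hc$,
\[
f(x_{k+1})\le f(z)+\inner{\nabla f(y_k)}{y_k-z}-s\Bigl(1-\tfrac{sL}{2}\Bigr)\norm{\nabla f(y_k)}^2-\tfrac{1}{2L}\norm{\nabla f(y_k)-\nabla f(z)}^2 .
\]
In \cite[Lemma~1]{AFCR2022} the two coefficients $s(1-sL/2)$ and $\frac{1}{2L}$ were replaced by $s/2$. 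I will keep them exact. I will apply the inequality twice, with $z=x_k$ and with $z=x^\star$. The $x^\star$ version has $\nabla f(x^\star)=0$, so it yields the term $-\frac1{2L}\norm{\nabla f(y_k)}^2$.

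The second step forms the combination $\tau_{k+1}^2\cdot(\text{version }z=x_k)$ minus $(\tau_{k+1}^2-\tau_{k+1})$ times the objective gap at $x_k$, plus $\tau_{k+1}$ times the $x^\star$ version, exactly as in the original proof. I then expand $\frac1{2s}(\norm{v_{k+1}}^2-\norm{v_k}^2)$. The cancellation identity of \cite{AFCR2022} uses $v_{k+1}-v_k=-\tau_{k+1}s\bigl(\nabla f(y_k)+\ldots\bigr)$, which follows from the definition of $y_k$, $\tau_{k+1}a_k=\tau_k-1$ and $\alpha\ge3$. It removes all cross terms $\inner{\nabla f(y_k)}{v_k}$. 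After this, $\mathcal E_{k+1}-\mathcal E_k$ is bounded above by three pieces:
\begin{itemize}[leftmargin=*]
\item a nonpositive term $-(\tau_{k+1}^2-\tau_{k+1}-\tau_k^2)(f(x_k)-f^\star)$, which is $\le 0$ when $\alpha\ge3$, with strict negativity of order $k$ when $\alpha>3$;
\item the term $-Q_k(\nabla f(y_k),\nabla f(x_k))$;
\item lower-order contributions.
\end{itemize}
Here $Q_k$ is the quadratic form with $2\times2$ symmetric matrix $M_k$. The second vector is most naturally $\nabla f(x_k)$, which is involved both through $\beta\sqrt s\,\nabla f$ in $v_k$ and through $\norm{\nabla f(y_k)-\nabla f(x_k)}^2$.

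The third step is the positivity analysis, and I expect it to be the main obstacle. This means computing $M_k$ exactly and showing that $M_k/\tau_{k+1}^2$, expressed in the dimensionless variables $q$ and $\delta$, converges to a limit matrix $M_\infty$. The natural scaling is $\nabla f\mapsto\sqrt s\,\nabla f$. I then check that $M_\infty$ has positive diagonal entries and positive determinant, and that the determinant condition reduces to $(\delta-1)^2<C_q$. Heuristically:
\begin{itemize}[leftmargin=*]
\item The diagonal entry on $\nabla f(y_k)$ collects $s(1-q/2)+\frac1{2L}-\frac s2\ge$ a positive multiple of $s$.
\item The entry on $\nabla f(x_k)$ comes from $\frac1{2L}$ and from $\frac{\beta^2 s}{2}$-type terms.
\item The off-diagonal term is $-\frac1{L}$ plus a $\beta\sqrt s$ contribution.
\end{itemize}
After multiplying by $L/s$, this produces a quadratic in $\delta$ whose discriminant condition should be exactly $\delta^2-2\delta-q(1-q)<0$. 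Carrying this algebra through carefully is where the work is. Since the eigenvalues of $M_k/\tau_{k+1}^2$ are continuous in its entries, $M_k\succeq c\,\tau_{k+1}^2 I$ for all $k\ge k_0$, with some $c>0$.

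The last step derives the conclusions for $k\ge k_0$. From $\mathcal E_{k+1}-\mathcal E_k\le -c\tau_{k+1}^2\bigl(\norm{\nabla f(y_k)}^2+\norm{\nabla f(x_k)}^2\bigr)\le0$, the energy is eventually nonincreasing, hence bounded. This gives $\tau_k^2(f(x_k)-f^\star)\le\mathcal E_{k_0}$, i.e.\ \eqref{eq:objective-rate}. Telescoping gives \eqref{eq:gradient-summability}, since $\tau_{k+1}\sim k/(\alpha-1)$ and the finitely many initial terms are finite. For the $o(k^{-1})$ claim I follow \cite[Theorem~6]{AFCR2022}. The step $x_{k+1}-y_k=-s\nabla f(y_k)$, the Lipschitz bound, and the boundedness of $\tau_k\norm{x_k-x_{k-1}}$ obtained from the energy and $v_k$ show that $k^2\norm{\nabla f(x_k)}^2$ has summable variation. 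Combined with summability, this forces $k^2\norm{\nabla f(x_k)}^2\to0$. Finally, \eqref{eq:best-iterate} follows from
\[
\min_{i\le k}\norm{\nabla f(x_i)}^2\sum_{i=k/2}^{k}i^2\le\sum_{i\ge k/2}i^2\norm{\nabla f(x_i)}^2<\infty,
\]
because the sum on the left is of order $k^3$.
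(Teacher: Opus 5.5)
\textbf{There is a gap at the central step.} Your architecture matches the paper's: the exact-coefficient one-step bound, the weights $\tau(\tau-1)$ and $\tau$ on the $z=x_k$ and $z=x^\star$ versions, the cancellation identity, and eventual coercivity of a $2\times2$ form. The trouble is that the step carrying the whole content of the theorem, namely the exact dissipation form and its limiting determinant, is deferred. Your sketch of that step is also wrong in a way that matters. So the announced condition $\delta^2-2\delta-q(1-q)<0$ is read off from the statement, not derived.

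Here is the correct computation. The identity is exactly $v_{k+1}-v_k=-s\tau_{k+1}\nabla f(y_k)$, with no ``$+\ldots$''. It uses $\tau_k=1+a_k\tau_{k+1}$ and $\tau_k=(1-\frac1k)\tau_{k+1}$, not $\alpha\ge3$. Write $\tau=\tau_{k+1}$, $X=\nabla f(x_k)$, $Y=\nabla f(y_k)$. Then $\frac1{2s}(\norm{v_{k+1}}^2-\norm{v_k}^2)=-\tau\inner{Y}{v_{k+1}}-\frac{s\tau^2}{2}\norm{Y}^2$, and the inner-product terms combine to $\tau^2 s\norm{Y}^2-\tau^2\beta\sqrt s\inner{X}{Y}$. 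Hence $X$ enters only through this cross term and through $\frac1{2L}\norm{X-Y}^2$. There are no ``lower-order contributions'' and no $\beta^2$-type terms on the $X$-diagonal. The bound is exactly $\mathcal E_{k+1}-\mathcal E_k\le-\tau\widetilde B_k$, with
$2L\widetilde B_k=(\tau-1)\norm{X}^2+\tau C_q\norm{Y}^2+2\bigl(\tau\delta-(\tau-1)\bigr)\inner{X}{Y}$.

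The extra diagonal term you anticipated is not harmless. If the $X$-diagonal had limit $1+\gamma$ with $\gamma>0$ depending on $\delta$, the determinant would be $(1+\gamma)C_q-(\delta-1)^2$, giving a different region. So your list of entries and your announced discriminant are mutually inconsistent. The correct limit, after multiplying by $2L$ rather than $L/s$, is $M_\infty=\begin{pmatrix}1&\delta-1\\ \delta-1&C_q\end{pmatrix}$. Its determinant is $C_q-(\delta-1)^2=-\bigl(\delta^2-2\delta-q(1-q)\bigr)$. Using $\delta>0$ and $C_q\ge1$, positivity of this determinant is equivalent to \eqref{eq:main-condition}.

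\textbf{Smaller points.}
\begin{itemize}
\item The objective term has the wrong sign. The bound contains $+(\tau_{k+1}^2-\tau_{k+1}-\tau_k^2)(f(x_k)-f^\star)\le0$. Your expression $-(\tau_{k+1}^2-\tau_{k+1}-\tau_k^2)(f(x_k)-f^\star)$ is nonnegative.
\item The combination should simply be $\tau(\tau-1)$ times the $z=x_k$ inequality plus $\tau$ times the $z=x^\star$ inequality. This requires $\tau_{k+1}\ge1$.
\item For \eqref{eq:gradient-little-o} no variation argument is needed. The terms of the convergent series $\sum k^2\norm{\nabla f(x_k)}^2$ tend to zero, which is exactly $\norm{\nabla f(x_k)}=o(k^{-1})$. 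Your sketch would also need more than you have: boundedness of $\tau_k\norm{x_k-x_{k-1}}$ does not follow from the energy bound alone, because $v_k$ also contains $x_{k-1}-x^\star$.
\item Your tail-sum argument for \eqref{eq:best-iterate} is correct, and even gives $o(k^{-3})$.
\end{itemize}
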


Since $q(1-q)\geq0$ for $q\in(0,1]$, condition \eqref{eq:main-condition} is satisfied under the simpler requirement
\begin{equation}
\label{eq:simple-condition-intro}
0<\beta L\sqrt{s}<2.
\end{equation}
%=====================================================
\section{The local refinement: retaining the exact coefficients} \label{sec:local-estimate}
%=====================================================
We begin with the smooth convex interpolation inequality. In finite dimensions, it follows from
\cite[Theorem~2.1.5, inequality~(2.1.7)]{Nesterov} by interchanging $x$ and $y$. We give a short proof in the Hilbert-space setting because
the exact coefficient in this inequality will be used below. The inequality that will be used is the following: for every $x,y\in\Hc$,
\begin{equation}
\label{eq:interpolation}
f(y)\leq f(x)+\inner{\nabla f(y)}{y-x}
-\frac{1}{2L}\norm{\nabla f(x)-\nabla f(y)}^2.
\end{equation}
Indeed, fix $y\in\Hc$ and define
$$
h(z):=f(z)-\inner{\nabla f(y)}{z}.
$$
The function $h$ is convex and Fr\'echet differentiable, its gradient is
$L$-Lipschitz, and $
\nabla h(y)=0.$
Thus, $y$ minimizes $h$. {Applying the standard descent lemma, which is valid in Hilbert spaces by integration along line segments, to $h$ at $x$
with step size $1/L$ gives}
$$
h\left(x-\frac{1}{L}\nabla h(x)\right)
\leq h(x)-\frac{1}{2L}\norm{\nabla h(x)}^2.
$$
Since $y$ minimizes $h$, it follows that
$$
h(y)\leq
h\left(x-\frac{1}{L}\nabla h(x)\right)
\leq h(x)-\frac{1}{2L}\norm{\nabla h(x)}^2.
$$
Finally,
$$
\nabla h(x)=\nabla f(x)-\nabla f(y),
$$
and expanding the definition of $h$ yields \eqref{eq:interpolation}.

We now combine \eqref{eq:interpolation} with the descent step while retaining the exact coefficients.

\begin{proposition}
\label{prop:gradient-step}
Let $0<s\leq1/L$ and set $q=sL$. For $x,y\in\Hc$, define
$$
X:=\nabla f(x),
\qquad
Y:=\nabla f(y).
$$
Then
\begin{equation}
\label{eq:gradient-step}
f(y-sY)
\leq f(x)+\inner{Y}{y-x}
-a\norm{Y}^2-b\norm{X-Y}^2,
\end{equation}
where
\begin{equation}
\label{eq:ab}
a:=s\left(1-\frac q2\right),
\qquad
b:=\frac{1}{2L}.
\end{equation}
\end{proposition}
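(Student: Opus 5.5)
The plan is to combine the descent lemma at the gradient step $y\mapsto y-sY$ with the interpolation inequality \eqref{eq:interpolation}, keeping every coefficient at its exact value.

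First, the descent lemma for $f$, whose gradient is $L$-Lipschitz, is applied from the point $y$ with displacement $-sY$. This gives
$$
f(y-sY)\le f(y)-s\norm{Y}^2+\frac{Ls^2}{2}\norm{Y}^2
= f(y)-s\Bigl(1-\frac q2\Bigr)\norm{Y}^2 .
$$
The coefficient $s(1-q/2)$ is exactly $a$. It is positive, and at least $s/2$, because $q\le 1$.

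Second, \eqref{eq:interpolation} bounds $f(y)$:
$$
f(y)\le f(x)+\inner{Y}{y-x}-\frac{1}{2L}\norm{X-Y}^2 .
$$
The coefficient here is exactly $b=1/(2L)$. Substituting this into the first display yields \eqref{eq:gradient-step} with the constants of \eqref{eq:ab}.

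There is no genuine obstacle. The only point requiring care is not to weaken either coefficient: $a$ should not be replaced by $s/2$, and $b=1/(2L)$ should not be replaced by $s/2$ (note $1/(2L)\ge s/2$ since $sL\le1$). The exact values are what the later analysis uses. The descent lemma in a Hilbert space follows, as already noted above, by integrating $\nabla f$ along the segment and using the Lipschitz bound.
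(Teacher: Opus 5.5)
Your proof is correct and follows the paper's proof: the descent lemma at $y$ with step $s$ gives the exact coefficient $a=s(1-q/2)$, and the interpolation inequality \eqref{eq:interpolation} gives the bound on $f(y)$ with $b=1/(2L)$. Combining the two yields \eqref{eq:gradient-step}, with neither coefficient weakened.
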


\begin{proof}
The descent lemma at $y$ gives
$$
f(y-sY)
\leq f(y)-s\norm{Y}^2+\frac{Ls^2}{2}\norm{Y}^2
=f(y)-s\left(1-\frac q2\right)\norm{Y}^2.
$$
Applying \eqref{eq:interpolation} and using
$X=\nabla f(x)$ and $Y=\nabla f(y)$ yields
$$
f(y-sY)
\leq f(x)+\inner{Y}{y-x}
-s\left(1-\frac q2\right)\norm{Y}^2
-\frac{1}{2L}\norm{X-Y}^2.
$$
This is \eqref{eq:gradient-step}.
\end{proof}
%%%%%%%%%%%%%%%%%%%%%%%%%%%%%%%
\begin{remark}\normalfont
The relation with \cite[Appendix~A.1, Lemma~1]{AFCR2022} is as follows. Inequality \eqref{eq:interpolation} is not
the statement of that lemma, but it appears as an intermediate estimate in its proof. More precisely, before weakening the coefficients, the
proof in \cite{AFCR2022} obtains
$$
f(y-sY)
\leq f(y)-s\left(1-\frac q2\right)\norm{Y}^2
$$
from the standard descent lemma, and
$$
f(y)\leq f(x)+\inner{Y}{y-x}
-\frac{1}{2L}\norm{X-Y}^2
$$
from smooth convex interpolation. The assumptions $q=sL\leq1$ and
$s\leq1/L$ imply
$$
s\left(1-\frac q2\right)\geq\frac s2,
\qquad
\frac{1}{2L}\geq\frac s2.
$$
Replacing both exact coefficients by $s/2$ gives the estimate stated in their Lemma 1:
$$
f(y-sY)
\leq f(x)+\inner{Y}{y-x}
-\frac s2\norm{Y}^2
-\frac s2\norm{X-Y}^2.
$$
This weakened estimate is then used in the proof of
\cite[Theorem~6]{AFCR2022}.

{The estimate in Proposition~\ref{prop:gradient-step} has the same left-hand side and the same affine term as the estimate in
\cite[Appendix~A.1, Lemma~1]{AFCR2022}. The difference lies in the two dissipative coefficients: Proposition~\ref{prop:gradient-step} retains
$a$ and $b$, whereas the estimate in \cite{AFCR2022} uses the common lower bound $s/2$.}

The corresponding margins are
\begin{equation}
\label{eq:margins}
a-\frac s2=\frac{s(1-q)}{2},
\qquad
b-\frac s2=\frac{1-q}{2L}.
\end{equation}
They are nonnegative for $0<s\leq1/L$ and vanish simultaneously when $s=1/L$.

{Consequently, Proposition~\ref{prop:gradient-step} implies the estimate in \cite[Appendix~A.1, Lemma~1]{AFCR2022}. The two estimates
coincide when $s=1/L$. When $0<s<1/L$, both coefficient inequalities are strict, so Proposition~\ref{prop:gradient-step} is strictly sharper at
the level of the dissipative coefficients.}

Thus, the interpolation inequality itself is not new. The refinement consists in retaining its exact coefficient together with the exact
coefficient produced by the descent step.

For later use, let $x^\star\in X^\star$. Since $\nabla f(x^\star)=0$, Proposition~\ref{prop:gradient-step} applied with $x=x^\star$ gives
\begin{equation}
\label{eq:gradient-step-minimizer}
f\bigl(y-s\nabla f(y)\bigr)-f^\star
\leq
\inner{\nabla f(y)}{y-x^\star}
-(a+b)\norm{\nabla f(y)}^2,
\end{equation}
with $a$ and $b$ defined in \eqref{eq:ab}.
\end{remark}
%=====================================================
\section{Revisiting the Lyapunov increment of \texorpdfstring{\cite{AFCR2022}}{[1]}}
\label{sec:lyapunov}
%=====================================================
The parameters $\tau_k$ in \eqref{eq:tau} satisfy
\begin{equation}
\label{eq:tau-identities}
\tau_{k+1}=\frac{k}{\alpha-1},
\qquad
\tau_k=1+a_k\tau_{k+1},
\qquad
\tau_k=\left(1-\frac1k\right)\tau_{k+1}.
\end{equation}
Since $\alpha\geq3$,
\begin{equation}
\label{eq:tau-inequality}
\tau_{k+1}^2-\tau_{k+1}-\tau_k^2
=\frac{k(3-\alpha)-1}{(\alpha-1)^2}\leq0.
\end{equation}

The cancellation identity associated with \eqref{eq:vk} is established in the proof of \cite[Theorem~6]{AFCR2022}. In the present
notation, it reads
\begin{equation}
\label{eq:v-cancellation}
v_{k+1}-v_k=-s\tau_{k+1}\nabla f(y_k).
\end{equation}
Indeed, the calculation in \cite{AFCR2022} leaves the coefficient
$$
\beta\sqrt{s}
\left[\tau_{k+1}\left(1-\frac1k\right)-\tau_k\right]
$$
in front of $\nabla f(x_{k-1})$, which vanishes by the last identity in
\eqref{eq:tau-identities}.

We now repeat the Lyapunov increment calculation of \cite[Theorem~6]{AFCR2022} with
Proposition~\ref{prop:gradient-step}. We include the details only to isolate the additional terms supplied by the margins in \eqref{eq:margins}; all other parts of the calculation follow the original proof.

This calculation leads to an energy bound that isolates the refined dissipation term.

\begin{proposition}
\label{prop:energy-increment}
Let $k$ be such that $\tau_{k+1}\geq1$, and set
$$
\tau:=\tau_{k+1},
\qquad
X:=\nabla f(x_k),
\qquad
Y:=\nabla f(y_k).
$$
Then
\begin{equation}
\label{eq:energy-increment}
\mathcal E_{k+1}-\mathcal E_k\leq-\tau\widetilde B_k,
\end{equation}
where
\begin{equation}
\label{eq:B-unexpanded}
\widetilde B_k
=\tau\beta\sqrt{s}\inner{Y}{X}
+\left(\frac{\tau s(1-q)}2+\frac{1}{2L}\right)\norm{Y}^2
+\frac{\tau-1}{2L}\norm{X-Y}^2.
\end{equation}
\end{proposition}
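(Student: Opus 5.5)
The plan is to follow the Lyapunov computation of \cite[Theorem~6]{AFCR2022}. The only change is that Proposition~\ref{prop:gradient-step} and \eqref{eq:gradient-step-minimizer} replace the weakened Lemma~1, and we then track every coefficient exactly. Write $X_{k-1}:=\nabla f(x_{k-1})$. We split the increment as
$$
\mathcal E_{k+1}-\mathcal E_k=\tau^2\bigl(f(x_{k+1})-f^\star\bigr)-\tau_k^2\bigl(f(x_k)-f^\star\bigr)+\frac1{2s}\bigl(\norm{v_{k+1}}^2-\norm{v_k}^2\bigr).
$$
For the kinetic part we use the cancellation identity \eqref{eq:v-cancellation}, which gives
$$
\frac1{2s}\bigl(\norm{v_{k+1}}^2-\norm{v_k}^2\bigr)=-\tau\inner{Y}{v_k}+\frac{s\tau^2}{2}\norm{Y}^2 .
$$

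\textbf{Step 1: rewrite $v_k$ in terms of $y_k$.} This is the step that makes all the inner products cancel. From \eqref{eq:igahd} and the identity $\tau_k-1=a_k\tau$ in \eqref{eq:tau-identities}, we have $\tau a_k(x_k-x_{k-1})=(\tau_k-1)(x_k-x_{k-1})$. Hence
$$
v_k=x_k-x^\star+(\tau_k-1)(x_k-x_{k-1})+\tau_k\beta\sqrt s\,X_{k-1}.
$$
Next we substitute $a_k(x_k-x_{k-1})=(y_k-x_k)+\beta\sqrt s(X-X_{k-1})+\frac{\beta\sqrt s}{k}X_{k-1}$. The coefficient of $X_{k-1}$ then becomes $\beta\sqrt s\,(\tau_k+\tau/k-\tau)$, which vanishes because $\tau_k=(1-1/k)\tau$. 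We expect to obtain
$$
v_k=x_k-x^\star+\tau(y_k-x_k)+\tau\beta\sqrt s\,X .
$$
Consequently,
$$
-\tau\inner{Y}{v_k}=-\tau\inner{Y}{x_k-x^\star}-\tau^2\inner{Y}{y_k-x_k}-\tau^2\beta\sqrt s\inner{Y}{X}.
$$

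\textbf{Step 2: potential part.} We take the convex combination with weights $\tau^2-\tau\ge0$ (here the hypothesis $\tau\ge1$ is used) and $\tau$. Specifically, we apply Proposition~\ref{prop:gradient-step} with $x=x_k$, $y=y_k$, multiplied by $\tau^2-\tau$, and \eqref{eq:gradient-step-minimizer} with $y=y_k$, multiplied by $\tau$. Since $x_{k+1}=y_k-sY$, this gives
$$
\tau^2\bigl(f(x_{k+1})-f^\star\bigr)\le(\tau^2-\tau)\bigl(f(x_k)-f^\star\bigr)+(\tau^2-\tau)\inner{Y}{y_k-x_k}+\tau\inner{Y}{y_k-x^\star}-\tau^2a\norm{Y}^2-\tau b\norm{Y}^2-(\tau^2-\tau)b\norm{X-Y}^2 .
$$
By \eqref{eq:tau-inequality}, $\tau^2-\tau\le\tau_k^2$. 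Together with $f(x_k)-f^\star\ge0$, this lets the term $-\tau_k^2(f(x_k)-f^\star)$ absorb $(\tau^2-\tau)(f(x_k)-f^\star)$.

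\textbf{Step 3: collect terms.} First we check that the inner products cancel. Writing $y_k-x^\star=(y_k-x_k)+(x_k-x^\star)$, the terms $\tau^2\inner{Y}{y_k-x_k}$ and $\tau\inner{Y}{x_k-x^\star}$ cancel exactly against those from Step~1. What remains is
$$
-\tau^2\beta\sqrt s\inner{Y}{X}+\Bigl(\tfrac{s\tau^2}{2}-\tau^2a-\tau b\Bigr)\norm{Y}^2-(\tau^2-\tau)b\norm{X-Y}^2 .
$$
With $a=s(1-q/2)$ and $b=1/(2L)$ from \eqref{eq:ab}, the coefficient of $\norm Y^2$ equals $-\tau\bigl(\tau s(1-q)/2+1/(2L)\bigr)$. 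Factoring out $-\tau$ gives exactly $-\tau\widetilde B_k$ as in \eqref{eq:B-unexpanded}.

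The main obstacle is Step~1, namely finding the representation of $v_k$ through $y_k$ in which all the $\nabla f(x_{k-1})$ contributions disappear. Everything after that is bookkeeping with the exact coefficients $a$ and $b$.
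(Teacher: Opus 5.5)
Your proof is correct and follows the paper's argument: the same nonnegative combination of Proposition~\ref{prop:gradient-step} and \eqref{eq:gradient-step-minimizer} (weights $\tau^2-\tau$ and $\tau$), the same absorption via \eqref{eq:tau-inequality}, and the same coefficient bookkeeping with the exact $a$ and $b$. The only difference is that you pair $Y$ with $v_k$ and derive $v_k=x_k-x^\star+\tau(y_k-x_k+\beta\sqrt{s}X)$, which is just $v_k=v_{k+1}+s\tau Y$ and so repeats the content of \eqref{eq:v-cancellation}; the paper pairs $Y$ with $v_{k+1}$ and reads the cancellation off directly from $x_{k+1}=y_k-sY$.
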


\begin{proof}
Apply Proposition~\ref{prop:gradient-step} with $x=x_k$, $y=y_k$, and
$y_k-sY=y_k-s\nabla f(y_k)=x_{k+1}$:
\begin{equation}
\label{eq:first-function}
f(x_{k+1})-f^\star
\leq f(x_k)-f^\star+\inner{Y}{y_k-x_k}
-a\norm{Y}^2-b\norm{X-Y}^2.
\end{equation}
Equation~\eqref{eq:gradient-step-minimizer} gives
\begin{equation}
\label{eq:second-function}
f(x_{k+1})-f^\star
\leq\inner{Y}{y_k-x^\star}-(a+b)\norm{Y}^2.
\end{equation}
{Multiplying \eqref{eq:first-function} by $\tau-1\geq0$ and adding
\eqref{eq:second-function} gives}
\begin{align}
\tau\bigl(f(x_{k+1})-f^\star\bigr)
\leq{}&(\tau-1)\bigl(f(x_k)-f^\star\bigr)
+\inner{Y}{(\tau-1)(y_k-x_k)+y_k-x^\star}\notag\\
&-(\tau a+b)\norm{Y}^2-(\tau-1)b\norm{X-Y}^2.
\label{eq:weighted-function}
\end{align}
{Multiplying by $\tau$, using \eqref{eq:tau-inequality}, and recalling
that $f(x_k)-f^\star\geq0$ gives}
\begin{align}
\tau^2\bigl(f(x_{k+1})-f^\star\bigr)
\leq{}&\tau_k^2\bigl(f(x_k)-f^\star\bigr)
+\tau\inner{Y}{(\tau-1)(y_k-x_k)+y_k-x^\star}\notag\\
&-\tau(\tau a+b)\norm{Y}^2
-\tau(\tau-1)b\norm{X-Y}^2.
\label{eq:weighted-function-final}
\end{align}
The identity
$$
\frac12\norm{u}^2-\frac12\norm{v}^2
=\inner{u-v}{u}-\frac12\norm{u-v}^2
$$
and \eqref{eq:v-cancellation} give
\begin{equation}
\label{eq:kinetic-increment}
\frac{1}{2s}\bigl(\norm{v_{k+1}}^2-\norm{v_k}^2\bigr)
=-\tau\inner{Y}{v_{k+1}}-\frac{s\tau^2}{2}\norm{Y}^2.
\end{equation}
After adding \eqref{eq:weighted-function-final} and
\eqref{eq:kinetic-increment}, the vector paired with $Y$ is
$$
(\tau-1)(y_k-x_k)+y_k-x^\star-v_{k+1}=\tau\bigl(y_k-x_{k+1}-\beta\sqrt{s}X\bigr)
=\tau\bigl(sY-\beta\sqrt{s}X\bigr).
$$
The combined inner products are therefore
\begin{equation}
\label{eq:inner-combination}
\tau^2s\norm{Y}^2-\tau^2\beta\sqrt{s}\inner{Y}{X}.
\end{equation}
Collecting \eqref{eq:weighted-function-final},
\eqref{eq:kinetic-increment}, and \eqref{eq:inner-combination} gives
$$
\mathcal E_{k+1}-\mathcal E_k
\leq{}-\tau^2\beta\sqrt{s}\inner{Y}{X}
+\left(\frac{s\tau^2}{2}-\tau^2a-\tau b\right)\norm{Y}^2-\tau(\tau-1)b\norm{X-Y}^2.
$$
Substitution of \eqref{eq:ab} proves
\eqref{eq:energy-increment}-\eqref{eq:B-unexpanded}.
\end{proof}
%=====================================================
\section{Positivity of the dissipation and proof of the refined condition}\label{sec:positivity}
%=====================================================
The sign of $\widetilde B_k$ is determined by a scalar $2\times2$ matrix.
This reduction does not require the Hilbert space to be finite-dimensional.

The relevant determinant condition gives a direct test for positivity.

\begin{proposition}
\label{prop:positivity}
With the notation of \eqref{eq:dimensionless},
\begin{equation}
\label{eq:B-expanded}
2L\widetilde B_k
=(\tau-1)\norm{X}^2+\tau C_q\norm{Y}^2
+2\bigl(\tau\delta-(\tau-1)\bigr)\inner{X}{Y}.
\end{equation}
For $\tau>1$, the coefficient matrix of this quadratic form is positive
semidefinite if and only if
\begin{equation}
\label{eq:finite-tau-condition}
\bigl(\tau(\delta-1)+1\bigr)^2
\leq\tau(\tau-1)C_q.
\end{equation}
If
\begin{equation}
\label{eq:limit-condition}
(\delta-1)^2<C_q,
\end{equation}
then \eqref{eq:finite-tau-condition} holds for every sufficiently large
$\tau$.
\end{proposition}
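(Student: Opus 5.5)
The proof has three steps: an algebraic expansion giving \eqref{eq:B-expanded}, a $2\times2$ semidefiniteness test, and a limiting argument in $\tau$.

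\textbf{Expansion.} Start from \eqref{eq:B-unexpanded}, multiply by $2L$, and expand $\norm{X-Y}^2=\norm{X}^2-2\inner{X}{Y}+\norm{Y}^2$. The coefficient of $\norm{X}^2$ is $\tau-1$. The coefficient of $\norm{Y}^2$ is
$$\tau sL(1-q)+1+(\tau-1)=\tau\bigl(1+q(1-q)\bigr)=\tau C_q,$$
using $sL=q$. The cross term is $2L\tau\beta\sqrt{s}\inner{X}{Y}-2(\tau-1)\inner{X}{Y}$. Since $L\beta\sqrt{s}=\delta$, this equals $2\bigl(\tau\delta-(\tau-1)\bigr)\inner{X}{Y}$, which gives \eqref{eq:B-expanded}.

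\textbf{Semidefiniteness test.} The form is $\inner{X}{X}A_{11}+2A_{12}\inner{X}{Y}+A_{22}\inner{Y}{Y}$ with the symmetric matrix
$$A=\begin{pmatrix}\tau-1 & \tau\delta-(\tau-1)\\ \tau\delta-(\tau-1) & \tau C_q\end{pmatrix}.$$
For $\tau>1$ and $C_q\geq1>0$, both diagonal entries are positive. Hence $A$ is positive semidefinite if and only if $\det A\geq0$, that is, $(\tau\delta-\tau+1)^2\leq\tau(\tau-1)C_q$. Since $\tau\delta-\tau+1=\tau(\delta-1)+1$, this is exactly \eqref{eq:finite-tau-condition}.

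It is worth remarking why positive semidefiniteness of the scalar matrix gives nonnegativity of the form on $\Hc\times\Hc$, since the paper stresses that finite dimension is not needed. Write $A=R^\top R$ for a real $2\times2$ matrix $R$. The form then equals $\norm{R_{11}X+R_{12}Y}^2+\norm{R_{21}X+R_{22}Y}^2\geq0$. Alternatively, when $A_{11}>0$, complete the square: $A_{11}\norm{X+(A_{12}/A_{11})Y}^2+(\det A/A_{11})\norm{Y}^2$. Only the scalar equivalence is what is claimed, so this remark is supplementary.

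\textbf{Limit in $\tau$.} Divide \eqref{eq:finite-tau-condition} by $\tau^2$. The left side becomes $(\delta-1+1/\tau)^2$, which tends to $(\delta-1)^2$ as $\tau\to\infty$. The right side becomes $(1-1/\tau)C_q$, which tends to $C_q$. Under the strict inequality \eqref{eq:limit-condition}, continuity gives a $\tau_0$ such that the normalized inequality holds strictly for all $\tau\geq\tau_0$. One could make $\tau_0$ explicit by collecting terms: the inequality reads $\tau^2\bigl((\delta-1)^2-C_q\bigr)+\tau\bigl(2(\delta-1)+C_q\bigr)+1\leq0$. The leading coefficient is negative, so the inequality holds beyond the largest root.

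None of these steps poses a real obstacle. The only point needing care is the bookkeeping in the $\norm{Y}^2$ coefficient, where the three contributions must combine into $\tau C_q$.
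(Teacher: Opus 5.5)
Your proof is correct and follows the paper's argument. You expand $\norm{X-Y}^2$ to obtain \eqref{eq:B-expanded}, reduce positive semidefiniteness of $M_\tau$ to the determinant condition using the positive diagonal entries, and pass to the limit after dividing by $\tau^2$; your extra factorization remark and the explicit quadratic in $\tau$ correspond to what the paper does later, in Corollary~\ref{cor:coercivity} and in the remark estimating the index from which positivity holds.
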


\begin{proof}
Expanding $\norm{X-Y}^2$ in \eqref{eq:B-unexpanded} and using
$Ls=q$ and $\delta=\beta L\sqrt{s}$ gives \eqref{eq:B-expanded}. The
coefficient matrix is
\begin{equation}
\label{eq:M-tau}
M_\tau=
\begin{pmatrix}
\tau-1 & \tau\delta-(\tau-1)\\
\tau\delta-(\tau-1) & \tau C_q
\end{pmatrix}.
\end{equation}
For $\tau>1$, both diagonal entries are positive. Hence $M_\tau$ is
positive semidefinite if and only if its determinant is nonnegative, which
is \eqref{eq:finite-tau-condition}. After division by $\tau^2$, the two
sides of \eqref{eq:finite-tau-condition} converge to $(\delta-1)^2$ and
$C_q$, respectively. Condition \eqref{eq:limit-condition} therefore implies
\eqref{eq:finite-tau-condition} for all sufficiently large $\tau$.
\end{proof}

The strict inequality makes the dissipation coercive for all sufficiently large indices.

\begin{corollary}
\label{cor:coercivity}
Suppose that \eqref{eq:limit-condition} holds. There exist $c>0$ and
$k_0\geq1$ such that, for every $k\geq k_0$,
\begin{equation}
\label{eq:coercivity}
\widetilde B_k
\geq c\tau_{k+1}
\left(\norm{\nabla f(x_k)}^2+\norm{\nabla f(y_k)}^2\right).
\end{equation}
\end{corollary}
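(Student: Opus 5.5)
We will work from the expanded form \eqref{eq:B-expanded}. Writing $\tau=\tau_{k+1}$, we have $2L\widetilde B_k=\langle M_\tau w,w\rangle$, where $w=(\norm{X},\norm{Y})$ is understood through the scalar quadratic form. Precisely, for the $2\times2$ symmetric matrix $M_\tau$ in \eqref{eq:M-tau} with smallest eigenvalue $\lambda_{\min}(M_\tau)$, Cauchy--Schwarz gives
\[
\bigl|\inner{X}{Y}\bigr|\le\norm{X}\norm{Y}.
\]
Together with the eigenvalue bound for $2\times2$ symmetric matrices, this yields
\[
2L\widetilde B_k\ge\lambda_{\min}(M_\tau)\bigl(\norm{X}^2+\norm{Y}^2\bigr).
\]
This holds because the off-diagonal term contributes at least $-|m_{12}|\cdot 2\norm{X}\norm{Y}$. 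The form in $(\norm X,\norm Y)$ with off-diagonal $-|m_{12}|$ has the same eigenvalues as $M_\tau$, since flipping the sign of the off-diagonal entry is an orthogonal similarity. This reduction to scalars works in any Hilbert space.

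It then suffices to show $\lambda_{\min}(M_\tau)\ge 2Lc\,\tau$ for large $\tau$. Normalize by setting $N_\tau:=M_\tau/\tau$, whose entries are $1-1/\tau$, $\delta-1+1/\tau$ and $C_q$. As $\tau\to\infty$, $N_\tau$ converges entrywise to
\[
N_\infty=\begin{pmatrix}1&\delta-1\\ \delta-1& C_q\end{pmatrix}.
\]
Condition \eqref{eq:limit-condition} gives $\det N_\infty=C_q-(\delta-1)^2>0$, and the trace of $N_\infty$ is $1+C_q>0$. Hence $N_\infty$ is positive definite, with smallest eigenvalue $\mu:=\lambda_{\min}(N_\infty)>0$. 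The smallest eigenvalue of a symmetric matrix is continuous in its entries (Weyl's inequality bounds the change by the operator norm of the perturbation). Therefore there is $\tau_0\geq1$ such that $\lambda_{\min}(N_\tau)\ge\mu/2$ for all $\tau\ge\tau_0$, and so
\[
\lambda_{\min}(M_\tau)\ge\frac{\mu}{2}\,\tau .
\]

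Since $\tau_{k+1}=k/(\alpha-1)\to\infty$, choose $k_0$ with $\tau_{k_0+1}\ge\max(\tau_0,1)$; this also guarantees that Proposition~\ref{prop:energy-increment} applies. Then \eqref{eq:coercivity} holds with $c=\mu/(4L)$. The only real care needed is in the first step: the cross term must be handled through the sign-flipped matrix, or equivalently by writing $M_\tau-\lambda I\succeq0$ and applying Cauchy--Schwarz, rather than assuming that $X$ and $Y$ are collinear. Beyond that, the argument is the routine continuity of eigenvalues in the limit.
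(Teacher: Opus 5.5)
Your proof is correct. It follows the paper's skeleton: normalize $M_\tau/\tau\to M_\infty$, get positive definiteness from $\det M_\infty=C_q-(\delta-1)^2>0$, deduce $\lambda_{\min}(M_\tau)\ge\tau\lambda_\infty/2$ eventually, and conclude with $c=\lambda_\infty/(4L)$. Where you differ is the passage from the scalar matrix to the Hilbert-space form.

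The paper diagonalizes $M_\tau=P_\tau\operatorname{diag}(\lambda_1,\lambda_2)P_\tau^{\mathsf T}$ and sets $U=p_{11}X+p_{21}Y$, $V=p_{12}X+p_{22}Y$. This gives the exact identity $2L\widetilde B_k=\lambda_1\norm{U}^2+\lambda_2\norm{V}^2$ with $\norm{U}^2+\norm{V}^2=\norm{X}^2+\norm{Y}^2$.

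You instead bound $2m_{12}\inner{X}{Y}\ge-2\lvert m_{12}\rvert\norm{X}\norm{Y}$ by Cauchy--Schwarz. You then note that the matrix with off-diagonal entry $-\lvert m_{12}\rvert$ is either $M_\tau$ or $DM_\tau D$ with $D=\operatorname{diag}(1,-1)$, so it has the same least eigenvalue.

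What each route buys:
\begin{itemize}
\item Your route is shorter and purely scalar, and it reaches the same final bound. It also makes explicit, via Weyl's inequality, the eigenvalue-continuity step that the paper only asserts.
\item Your trick is specific to the $2\times2$ case. For $n\times n$ forms, the Cauchy--Schwarz reduction yields the comparison matrix with entries $-\lvert m_{ij}\rvert$, which need not be similar to the original and can have strictly smaller least eigenvalue. For example, diagonal $2$ and off-diagonal entries $1$ in size $3$ give least eigenvalue $1$, while the comparison matrix has least eigenvalue $0$.
\item The paper's diagonalization works verbatim in any size and gives an identity rather than an inequality.
\end{itemize}

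One small correction: drop your opening claim that $2L\widetilde B_k=\inner{M_\tau w}{w}$ with $w=(\norm{X},\norm{Y})$. It is not an identity, since the cross term involves $\inner{X}{Y}$ and not $\norm{X}\norm{Y}$. Only the inequality you derive immediately afterwards is true.
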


\begin{proof}
Dividing \eqref{eq:M-tau} by $\tau$ and letting $\tau\to+\infty$ gives
$$
\frac{M_\tau}{\tau}\longrightarrow
M_\infty:=
\begin{pmatrix}
1 & \delta-1\\
\delta-1 & C_q
\end{pmatrix}.
$$The first leading principal minor of $M_\infty$ is $1$, and
$$
\det M_\infty=C_q-(\delta-1)^2>0.
$$
Thus $M_\infty$ is positive definite. Let
$\lambda_\infty>0$ denote its smallest eigenvalue. For all sufficiently
large $\tau$,
$$
\lambda_{\min}\left(\frac{M_\tau}{\tau}\right)
\geq\frac{\lambda_\infty}{2}.
$$
Let $\lambda_1(\tau)$ and $\lambda_2(\tau)$ be the eigenvalues of
$M_\tau$, and choose an orthogonal matrix
$P_\tau=(p_{ij})_{1\leq i,j\leq2}$ such that
$$
P_\tau^{\mathsf T}M_\tau P_\tau
=
\begin{pmatrix}
\lambda_1(\tau)&0\\
0&\lambda_2(\tau)
\end{pmatrix}.
$$
For $X,Y\in\Hc$, set
$$
U=p_{11}X+p_{21}Y,
\qquad
V=p_{12}X+p_{22}Y.
$$
The orthogonality of $P_\tau$ gives
$$
\norm{U}^2+\norm{V}^2
=
\norm{X}^2+\norm{Y}^2.
$$
Moreover, the quadratic form in \eqref{eq:B-expanded} becomes
$$
2L\widetilde B_k
=
\lambda_1(\tau)\norm{U}^2
+\lambda_2(\tau)\norm{V}^2.
$$
Consequently, for all sufficiently large $\tau$,
$$
2L\widetilde B_k
\geq
\lambda_{\min}(M_\tau)
\bigl(\norm{U}^2+\norm{V}^2\bigr)\geq
\frac{\lambda_\infty}{2}\tau
\bigl(\norm{X}^2+\norm{Y}^2\bigr).
$$
Taking $\tau=\tau_{k+1}$ proves \eqref{eq:coercivity} with $
c=\frac{\lambda_\infty}{4L},$ which completes the proof.
\end{proof}

We can now prove Theorem~\ref{thm:main} by combining the energy estimate and asymptotic coercivity.

\begin{proof}[Proof of Theorem~\ref{thm:main}]
For $q\in(0,1]$, we have $C_q\geq1$. Since $\delta>0$, condition
\eqref{eq:main-condition} is equivalent to
$(\delta-1)^2<C_q$. Proposition~\ref{prop:energy-increment} and
Corollary~\ref{cor:coercivity} yield constants $c>0$ and $k_0$ such that
\begin{equation}
\label{eq:energy-decay}
\mathcal E_{k+1}-\mathcal E_k
\leq-c\tau_{k+1}^2
\left(\norm{\nabla f(x_k)}^2+\norm{\nabla f(y_k)}^2\right)
\leq0
\end{equation}
for every $k\geq k_0$. We may increase $k_0$ so that $\tau_k>0$ on this
range. Since $\mathcal E_k\geq0$, eventual monotonicity gives
$$
0\leq f(x_k)-f^\star
\leq\frac{\mathcal E_k}{\tau_k^2}
\leq\frac{\mathcal E_{k_0}}{\tau_k^2}.
$$
Because $\tau_k=(k-1)/(\alpha-1)$, this proves
\eqref{eq:objective-rate}.

Summing \eqref{eq:energy-decay} from $k_0$ to $N$ gives
$$
c\sum_{k=k_0}^{N}\tau_{k+1}^2
\left(\norm{\nabla f(x_k)}^2+\norm{\nabla f(y_k)}^2\right)
\leq\mathcal E_{k_0}-\mathcal E_{N+1}
\leq\mathcal E_{k_0}.
$$
Letting $N\to+\infty$ and using
$\tau_{k+1}=k/(\alpha-1)$ proves \eqref{eq:gradient-summability}. The
finitely many indices before $k_0$ do not affect convergence. The terms of
each convergent nonnegative series tend to zero. Hence
$$
k^2\norm{\nabla f(x_k)}^2\longrightarrow0,
\qquad
k^2\norm{\nabla f(y_k)}^2\longrightarrow0,
$$
which proves \eqref{eq:gradient-little-o}.

Finally, let
$$
C_x:=\sum_{i=1}^{+\infty}i^2\norm{\nabla f(x_i)}^2.
$$
Then
$$
\min_{1\leq i\leq k}\norm{\nabla f(x_i)}^2
\sum_{i=1}^ki^2\leq C_x.
$$
Since $\sum_{i=1}^ki^2=k(k+1)(2k+1)/6$, the first estimate in
\eqref{eq:best-iterate} follows. The argument for $(y_i)$ is identical.
\end{proof}

The same calculation also gives the additional estimate required in \cite{ACFR2023}. The following remark clarifies this connection.

\begin{remark}\normalfont
\label{rem:acfr2023-extension}
Assume in addition that $\alpha>3$. In the passage from
\eqref{eq:weighted-function} to \eqref{eq:weighted-function-final}, retain
the difference
$$
\omega_k
:=\tau_k^2-\tau_{k+1}(\tau_{k+1}-1)
=\frac{k(\alpha-3)+1}{(\alpha-1)^2}.
$$
The calculation in Proposition~\ref{prop:energy-increment} then gives, for
all sufficiently large $k$,
$$
\mathcal E_{k+1}-\mathcal E_k
+\omega_k\bigl(f(x_k)-f^\star\bigr)
\leq-\tau_{k+1}\widetilde B_k.
$$
Under \eqref{eq:main-condition}, Corollary~\ref{cor:coercivity} gives
$\widetilde B_k\geq0$ for all sufficiently large $k$. After adding the
finite initial part, summation yields
$$
\sum_{k=1}^{+\infty}
\omega_k\bigl(f(x_k)-f^\star\bigr)<+\infty.
$$
Since $
\omega_k\geq\frac{\alpha-3}{(\alpha-1)^2}\,k,$ we obtain
$$
\sum_{k=1}^{+\infty}k\bigl(f(x_k)-f^\star\bigr)<+\infty.
$$
Together with Theorem~\ref{thm:main}, this recovers under
\eqref{eq:main-condition} the estimates stated in
\cite[Theorem~4.1]{ACFR2023}. After omitting finitely many initial indices,
the proof of \cite[Theorem~4.2]{ACFR2023} gives the same velocity estimates.
These estimates, the boundedness of $(\mathcal E_k)$, and the weighted
gradient summability are the inputs of the Opial argument in
\cite[Theorem~4.3]{ACFR2023}; no further restriction on $\beta$ is used.
Hence $(x_k)$ converges weakly to a point of $X^\star$. The proof of
\cite[Theorem~4.4]{ACFR2023} also applies. All these convergence arguments
remain those of \cite{ACFR2023}.
\end{remark}

A simpler sufficient condition follows directly from the refined parameter bound.

\begin{corollary}\label{cor:simple-condition}
Let $(x_k)$ be generated by \eqref{eq:igahd}, with $\alpha\geq3$,
$0<s\leq1/L$, and $\beta>0$. If
$$
0<\beta L\sqrt{s}<2,
$$
then all the conclusions of Theorem~\ref{thm:main} hold.
\end{corollary}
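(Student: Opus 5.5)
The plan is to reduce the corollary to Theorem~\ref{thm:main}: it is enough to show that $0<\beta L\sqrt{s}<2$ implies condition \eqref{eq:main-condition}. Once that is done, every conclusion of Theorem~\ref{thm:main} follows directly, since the remaining hypotheses ($\alpha\geq3$, $0<s\leq1/L$, $\beta>0$) are the same.

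For the implication I use the dimensionless quantities \eqref{eq:dimensionless}. With $q=sL\in(0,1]$ we have $q(1-q)\geq0$, so $C_q=1+q(1-q)\geq1$. Hence $\sqrt{1+sL(1-sL)}=\sqrt{C_q}\geq1$, and
$$
\delta=\beta L\sqrt{s}<2\leq1+\sqrt{C_q}.
$$
Positivity of $\delta$ follows from $\beta>0$, $L>0$ and $s>0$. This is exactly \eqref{eq:main-condition}.

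One could also state the result as $(\delta-1)^2<C_q$, the form used in Proposition~\ref{prop:positivity}. For $0<\delta<2$ we have $|\delta-1|<1\leq\sqrt{C_q}$, so this gives the same conclusion.

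There is no real obstacle here. The only point needing care is that $q\leq1$ is required for $C_q\geq1$, and this is guaranteed by the standing assumption $s\leq1/L$.
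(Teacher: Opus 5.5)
Your proof is correct and is essentially the paper's argument: both note that $q(1-q)\geq0$ for $q=sL\in(0,1]$. Hence $1+\sqrt{1+q(1-q)}\geq2>\beta L\sqrt{s}$, so condition \eqref{eq:main-condition} holds and Theorem~\ref{thm:main} applies directly.
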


\begin{proof}
For $q\in(0,1]$, $q(1-q)\geq0$, and therefore
$1+\sqrt{1+q(1-q)}\geq2$.
\end{proof}

The index from which positivity holds can also be estimated explicitly.
\begin{remark}
Assume that $\delta>0$.
Let $
D:=C_q-(\delta-1)^2>0$ and $
P(\tau):=
D\tau^2-\bigl(C_q+2(\delta-1)\bigr)\tau-1.$
Since, $
P(1)=-\delta^2<0,$
and $D>0$, the positive root $\tau_{\mathrm{pos}}$ of $P$ satisfies
$\tau_{\mathrm{pos}}>1$. Moreover, $P(\tau)\geq0$ whenever
$$
\tau\geq\tau_{\mathrm{pos}}
:=
\frac{C_q+2(\delta-1)
+\sqrt{\bigl(C_q+2(\delta-1)\bigr)^2+4D}}{2D}.
$$
Therefore, for every $k$ such that
$$
\tau_{k+1}=\frac{k}{\alpha-1}\geq\tau_{\mathrm{pos}},
$$
the matrix $M_{\tau_{k+1}}$ is positive semidefinite, and hence
$\widetilde B_k\geq0$. In particular, the Lyapunov energy is nonincreasing
from a finite index onward. The asymptotic estimates require only eventual
positivity and coercivity: the finitely many preceding indices are absorbed
into the constants. Consequently, no explicit value of this index is needed
for the convergence results.
\end{remark}
%=====================================================
\section{Comparison of the parameter regions}\label{sec:comparison}
%=====================================================
\subsection{Lyapunov parameter regions}

In terms of $q=sL$ and $\delta=\beta L\sqrt{s}$, the condition from
\cite[Theorem~6]{AFCR2022} is
\begin{equation}
\label{eq:old-condition}
0\leq\beta<2\sqrt{s}
\quad\Longleftrightarrow\quad
0\leq\delta<2q.
\end{equation}
For $\beta>0$, the original condition becomes $0<\delta<2q$. The simple
condition in Corollary~\ref{cor:simple-condition} is $0<\delta<2$, while the
refined condition is $0<\delta<1+\sqrt{1+q(1-q)}$. Since $0<q\leq1$,
$$
2q\leq2\leq1+\sqrt{1+q(1-q)}.
$$
Thus, in the positive-damping regime, the refined condition contains the
original one, and the inclusion is strict when $s<1/L$.
\begin{table}[htbp]
\centering
\caption{Comparison of the parameter conditions, with
$q=sL$ and $\delta=\beta L\sqrt{s}$.}
\label{tab:parameter-comparison}
\begin{tabularx}{\textwidth}{
@{}
p{0.20\textwidth}
>{\centering\arraybackslash}p{0.30\textwidth}
>{\centering\arraybackslash}X
@{}
}
\toprule
Parameter condition & Dimensionless form & Bound on $\beta$ \\
\midrule
Original
& $0\leq\delta<2q$
& $0\leq\beta<2\sqrt{s}$ \\

Simpler sufficient
& $0<\delta<2$
& $0<\beta<2/(L\sqrt{s})$ \\

Refined
& $0<\delta<1+\sqrt{1+q(1-q)}$
& \mbox{$0<\beta<
\dfrac{1+\sqrt{1+sL(1-sL)}}{L\sqrt{s}}$} \\
\bottomrule
\end{tabularx}
\end{table}
%%%%%%%%%%%%
For fixed $L$ and $\beta>0$, the difference becomes pronounced as
$s\to0$. The original upper bound satisfies $2\sqrt{s}\to0$, whereas
$$
\frac{1+\sqrt{1+sL(1-sL)}}{L\sqrt{s}}
\sim\frac{2}{L\sqrt{s}}\longrightarrow+\infty.
$$
Every fixed $\beta>0$ therefore satisfies the refined condition once $s$ is
sufficiently small. At the endpoint $s=1/L$, i.e. $q=1$, both margins in
\eqref{eq:margins} vanish, and the refined condition reduces to the original
one.

{
\subsection{Spectral analysis for convex quadratic objectives}
\label{subsec:quadratic}

{Under the original condition $0<\beta<2\sqrt{s}$ and for
$\alpha>3$, \cite[Theorem~4.3]{ACFR2023} proves weak convergence of the
iterates for smooth convex functions on a Hilbert space. The argument below is
restricted to finite-dimensional convex quadratic functions. It extends the
known convergence result to a parameter range larger than the refined
Lyapunov region, includes $\alpha=3$, and gives geometric decay of the
objective residual and the gradients. It does not prove that the energy
$\mathcal E_k$ is nonincreasing.}

Consider the function
$$
f_Q(x)=\frac12\langle Q(x-x^\star),x-x^\star\rangle+f^\star,
\qquad
Q=Q^\top\succeq0,
\qquad
\norm{Q}\leq L,
$$
on $\R^n$. Here, $\norm{Q}$ denotes the operator norm induced by the Euclidean norm. Let
$$
Q=U\operatorname{diag}(\lambda_1,\ldots,\lambda_n)U^\top,
\qquad
0\leq\lambda_j\leq L,
$$
where $U$ is orthogonal, and introduce
$$
\xi_k:=U^\top(x_k-x^\star),
\qquad
\eta_k:=U^\top(y_k-x^\star).
$$
For each eigencoordinate $j$, set $q_j:=s\lambda_j, \mbox{ and }d_j:=\beta\lambda_j\sqrt{s}.$
Substitution in \eqref{eq:igahd} gives, for each coordinate $j$,
\begin{equation}
\label{eq:quadratic-mode}
\xi_{k+1,j}=(1-q_j)
\left[
\left(2-d_j-\frac{\alpha}{k}\right)\xi_{k,j}
+\left(-(1-d_j)+\frac{\alpha-d_j}{k}\right)\xi_{k-1,j}
\right].
\end{equation}
The coefficients in this recurrence depend on $k$.
For an index $j$ with $q_j>0$, define
$$
Z_{k,j}:=
\begin{pmatrix}
\xi_{k,j}\\
\xi_{k-1,j}
\end{pmatrix}.
$$
Equation~\eqref{eq:quadratic-mode} is equivalent to the exact system: $
Z_{k+1,j}=A_{k,j}Z_{k,j},$
where
$$
A_{k,j}:=
\begin{pmatrix}
(1-q_j)\left(2-d_j-\dfrac{\alpha}{k}\right)
&
(1-q_j)\left(-(1-d_j)+\dfrac{\alpha-d_j}{k}\right)\\
1&0
\end{pmatrix},
$$
and
$$
A_{k,j}\longrightarrow
A_{\infty,j}:=
\begin{pmatrix}
(1-q_j)(2-d_j)&-(1-q_j)(1-d_j)\\
1&0
\end{pmatrix}.
$$
The matrix $A_{\infty,j}$ is the limit of the coefficient matrices. Its
characteristic polynomial is
\begin{equation}
\label{eq:quadratic-polynomial}
p_j(r)=r^2-(1-q_j)(2-d_j)r+(1-q_j)(1-d_j).
\end{equation}
A matrix is Schur stable if all its eigenvalues have modulus less than one.
For a real quadratic polynomial $p(r)=r^2-Ar+B$, the degree-two case of the
Jury criterion states that both roots have modulus less than one if and only
if
$$
1-A+B>0,
\qquad
1+A+B>0,
\qquad
1-B>0;
$$
see \cite[p.~146, case $n=2$]{Jury}. For
\eqref{eq:quadratic-polynomial}, these quantities are
$$
1-A+B=q_j,
\qquad
1+A+B=1+(1-q_j)(3-2d_j),
\qquad
1-B=q_j+d_j-q_jd_j.
$$
If $0<q_j<1$ and $d_j\geq0$, the first and third quantities are positive.
Therefore, only the second condition imposes a restriction. The matrix
$A_{\infty,j}$ is Schur stable if and only if
\begin{equation}
\label{eq:exact-modal}
d_j<\frac{4-3q_j}{2(1-q_j)}.
\end{equation}
Since $d_j=q_j\beta/\sqrt{s}$, define
$$
\phi(q):=\frac{4-3q}{2q(1-q)},
\qquad 0<q<1.
$$
For a fixed matrix $Q$, all limiting matrices with $0<q_j<1$ are Schur stable
if and only if
\begin{equation}
\label{eq:fixed-quadratic}
\frac{\beta}{\sqrt{s}}<
\min_{\substack{\lambda\in\sigma(Q)\\0<s\lambda<1}}
\phi(s\lambda),
\end{equation}
where the minimum is interpreted as $+\infty$ when the indexing set is empty.

This condition gives geometric decay of the positive spectral modes. The zero
modes are treated separately because they determine the limit of the
iterates.

\begin{proposition}
\label{prop:fixed-quadratic}
Let $\alpha\geq3$, $0<s\leq1/L$, and $\beta\geq0$, and let $Q$ be a fixed
symmetric positive semidefinite matrix with $\norm{Q}\leq L$. The limiting
matrices associated with all positive spectral modes are Schur stable if and
only if \eqref{eq:fixed-quadratic} holds.

If this condition holds, then, for every initial pair $x_0,x_1\in\R^n$,
there exist $C>0$, $\rho\in(0,1)$, and $k_0\geq1$ such that, for every
$k\geq k_0$,
\begin{equation}
\label{eq:quadratic-geometric}
0\leq f_Q(x_k)-f^\star\leq C\rho^{2k}
\end{equation}
and
\begin{equation}
\label{eq:quadratic-gradient-geometric}
\norm{\nabla f_Q(x_k)}+\norm{\nabla f_Q(y_k)}
\leq C\rho^k.
\end{equation}
The constants may depend on $Q$ and on the initial pair.
Consequently, the estimates
\eqref{eq:objective-rate}-\eqref{eq:best-iterate} hold for $f_Q$.
Moreover, $(x_k)$ converges to a minimizer of $f_Q$.
\end{proposition}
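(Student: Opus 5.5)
The equivalence in the first assertion is essentially already established by the computation preceding the statement, so I would only organize it. The positive modes split into two types. If $0<q_j<1$, the Jury criterion applies: the first and third conditions hold automatically because $d_j\geq 0$, and the second reduces to \eqref{eq:exact-modal}. Substituting $d_j=q_j\beta/\sqrt{s}$ and dividing by $q_j>0$ turns this into $\beta/\sqrt{s}<\phi(q_j)$. Taking the minimum over the finitely many eigenvalues with $0<s\lambda<1$ gives exactly \eqref{eq:fixed-quadratic}. If $q_j=1$, which can only happen when $s\lambda_j=1$, then $A_{\infty,j}$ has zero first row, so it is nilpotent and trivially Schur stable. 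This is consistent with the convention that such indices are excluded from the minimum.

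For the geometric decay I would argue mode by mode and use the standard perturbation argument for asymptotically autonomous linear recurrences. Fix a mode with $0<q_j<1$. Since $A_{\infty,j}$ is Schur stable, its spectral radius $r_j$ is less than $1$. Choose $\rho_j\in(r_j,1)$. There is a vector norm $\|\cdot\|_{*}$ on $\R^2$ whose induced matrix norm satisfies $\|A_{\infty,j}\|_*\leq(r_j+\rho_j)/2$; one can take a Jordan-adapted weighted norm. Because $A_{k,j}-A_{\infty,j}=O(1/k)$ entrywise, there is $k_j$ with $\|A_{k,j}\|_*\leq\rho_j$ for all $k\geq k_j$. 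Hence $\|Z_{k,j}\|_*\leq\rho_j^{\,k-k_j}\|Z_{k_j,j}\|_*$, and by equivalence of norms $|\xi_{k,j}|\leq C_j\rho_j^k$. Modes with $q_j=1$ satisfy $\xi_{k+1,j}=0$ for all $k\geq1$ by \eqref{eq:quadratic-mode}. Setting $\rho:=\max_j\rho_j$ over the finitely many positive modes, we get $|\xi_{k,j}|\leq C\rho^k$ for every positive mode. The point I expect to need the most care is the choice of $k_0$ and $C$: they depend on the initial pair through $Z_{k_j,j}$. This causes no difficulty, because the finitely many early steps form a finite product of fixed matrices, and the statement allows this dependence.

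It remains to translate the modal bounds into the conclusions. Since $f_Q(x_k)-f^\star=\tfrac12\sum_j\lambda_j\xi_{k,j}^2$ and zero modes contribute nothing, \eqref{eq:quadratic-geometric} follows. Likewise $\norm{\nabla f_Q(x_k)}^2=\sum_j\lambda_j^2\xi_{k,j}^2$. For $y_k$, the last line of \eqref{eq:igahd} gives, on each mode, $\xi_{k+1,j}=(1-q_j)\eta_{k,j}$. When $q_j<1$ this yields $|\eta_{k,j}|\leq C\rho^{k+1}/(1-q_j)$. When $q_j=1$, I would instead read $\eta_{k,j}$ directly from the formula for $y_k$, which writes it as a linear combination of $\xi_{k,j}$ and $\xi_{k-1,j}$ with bounded coefficients. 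This gives \eqref{eq:quadratic-gradient-geometric} after enlarging $C$. Geometric decay dominates every polynomial weight, so \eqref{eq:objective-rate}--\eqref{eq:best-iterate} follow immediately.

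Finally, convergence of $(x_k)$ requires treating the zero modes, for which $q_j=d_j=0$. There \eqref{eq:quadratic-mode} becomes $\xi_{k+1,j}-\xi_{k,j}=(1-\alpha/k)(\xi_{k,j}-\xi_{k-1,j})$. The increments are therefore $\prod_{i=k_1}^{k}(1-\alpha/i)$ times a fixed increment, where $k_1>\alpha$, and this product is $O(k^{-\alpha})$. Since $\alpha\geq3>1$, the increments are summable and each $\xi_{k,j}$ converges. The positive modes tend to $0$, so $x_k\to x^\star+U\xi_\infty$ with $\xi_\infty\in\ker\operatorname{diag}(\lambda)$. This limit lies in $x^\star+\ker Q=\argminop f_Q$.
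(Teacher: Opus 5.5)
Your proposal is correct and follows the paper's proof essentially step by step: the Jury reduction together with nilpotency of $A_{\infty,j}$ when $q_j=1$, an adapted norm in which $A_{k,j}$ is eventually a strict contraction, the identity $\xi_{k+1,j}=(1-q_j)\eta_{k,j}$ for the bounds on $y_k$, and the product $\prod_{\ell}(1-\alpha/\ell)=O(k^{-\alpha})$ giving summable increments for the zero modes. The only cosmetic point is the case where no mode has $0<q_j<1$: then your $\max_j\rho_j$ is taken over an empty set, and, as the paper does, you should simply take any $\rho\in(0,1)$ there.
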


\begin{proof}
For $0<q_j<1$, the equivalence follows from
\eqref{eq:exact-modal}. If $q_j=1$, the limiting matrix is nilpotent and is
therefore Schur stable for every $d_j\geq0$.

Suppose that \eqref{eq:fixed-quadratic} holds, and
fix an index $j$ with $0<q_j<1$. Since $A_{\infty,j}$ is Schur
stable, a standard result in
finite-dimensional matrix analysis gives a norm $\norm{\cdot}_{\ast,j}$ on
$\R^2$ such that the induced matrix norm satisfies
$$
\norm{A_{\infty,j}}_{\ast,j}<1.
$$
All matrix norms on $\R^{2\times2}$ are equivalent. Hence
$A_{k,j}\to A_{\infty,j}$ also holds in this induced norm. We may therefore
choose $\rho_j\in(0,1)$ and $k_j\geq1$ such that $
\norm{A_{k,j}}_{\ast,j}\leq\rho_j$
for every $k\geq k_j$. The exact state equation then gives
$$
\norm{Z_{k+1,j}}_{\ast,j}
\leq\rho_j\norm{Z_{k,j}}_{\ast,j},
\quad \mbox{ and therefore }
\quad
\norm{Z_{k,j}}_{\ast,j}
\leq \rho_j^{k-k_j}\norm{Z_{k_j,j}}_{\ast,j}
$$
for every $k\geq k_j$. Equivalence of norms on $\R^2$ shows that
$\xi_{k,j}$ decays geometrically. The gradient step gives the exact identity $
\xi_{k+1,j}=(1-q_j)\eta_{k,j}.$
Since $q_j<1$, $\eta_{k,j}$ also decays geometrically.

If $q_j=1$, \eqref{eq:quadratic-mode} gives $\xi_{k+1,j}=0$ for every $k\geq1$. Hence $\xi_{k,j}=0$ for $k\geq2$. The definition of $y_k$ then
gives $\eta_{k,j}=0$ for $k\geq3$. If the set of indices $j$ such that $0<q_j<1$ is nonempty, we take the largest of the corresponding contraction factors. If this set is empty,
every positive spectral component corresponds to $q_j=1$ and therefore vanishes after finitely many iterations; the assertion is also trivial
when $Q$ has no positive eigenvalue. Increasing the constant then gives common geometric bounds.

For a zero mode, \eqref{eq:quadratic-mode} reduces to
$$
\Delta\xi_{k+1,j}
=\left(1-\frac{\alpha}{k}\right)\Delta\xi_{k,j},
\qquad
\Delta\xi_{k,j}:=\xi_{k,j}-\xi_{k-1,j}.
$$
Choose an integer $k_1>\alpha$. For $k\geq k_1$, all factors below are
positive. Iteration and the inequality $\log(1-u)\leq-u$ for $u\in(0,1)$
give
$$
\lvert\Delta\xi_{k,j}\rvert
=\lvert\Delta\xi_{k_1,j}\rvert
\prod_{\ell=k_1}^{k-1}\left(1-\frac{\alpha}{\ell}\right)
=O(k^{-\alpha}).
$$
Since $\alpha>1$, the sequence of increments is summable. Thus every nullspace component converges. The positive components of $x_k-x^\star$
converge to zero. It follows that $(x_k)$ converges to a point $\bar x$ such that $Q(\bar x-x^\star)=0$. This point is a minimizer of $f_Q$.

Finally,
$$
f_Q(x_k)-f^\star
=\frac12\sum_{j=1}^n\lambda_j\lvert\xi_{k,j}\rvert^2, \qquad
\norm{\nabla f_Q(x_k)}^2
=\sum_{j=1}^n\lambda_j^2\lvert\xi_{k,j}\rvert^2,
\qquad
\norm{\nabla f_Q(y_k)}^2
=\sum_{j=1}^n\lambda_j^2\lvert\eta_{k,j}\rvert^2.
$$
The zero modes do not contribute to these expressions. The geometric bounds
on the positive modes give \eqref{eq:quadratic-geometric} and
\eqref{eq:quadratic-gradient-geometric}. Geometric decay implies
\eqref{eq:objective-rate}-\eqref{eq:best-iterate}.
\end{proof}

The condition for a fixed matrix depends on its positive eigenvalues. Taking
the infimum over all possible eigenvalues gives a condition that is uniform
over the class $Q\succeq0$, $\norm{Q}\leq L$.

\begin{proposition}
\label{prop:uniform-quadratic}
Fix $\alpha\geq3$, $0<s\leq1/L$, and $\beta\geq0$, and set
$$
q_{\max}:=sL\in(0,1].
$$
The following statements are equivalent:
\begin{enumerate}[leftmargin=*,label=\textup{(\roman*)}]
\item For every symmetric positive semidefinite matrix $Q$ satisfying
$\norm{Q}\leq L$, the limiting matrix associated with each positive spectral
mode is Schur stable.
\item The parameter satisfies
\begin{equation}
\label{eq:uniform-quadratic}
\frac{\beta}{\sqrt{s}}<T(q_{\max}):=
\begin{cases}
\dfrac{4-3q_{\max}}{2q_{\max}(1-q_{\max})},
&0<q_{\max}\leq\dfrac23,\\[2mm]
\dfrac92,&\dfrac23\leq q_{\max}\leq1.
\end{cases}
\end{equation}
\end{enumerate}
Whenever these conditions hold, the conclusions of
Proposition~\ref{prop:fixed-quadratic} apply to every such fixed matrix $Q$.
\end{proposition}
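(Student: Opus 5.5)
The plan is to reduce statement (i) to the modal criterion of Proposition~\ref{prop:fixed-quadratic}, and then minimize $\phi$ over the set of attainable values $q=s\lambda$.

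\textbf{Step 1 (reduction to a one-variable condition).} A positive eigenvalue $\lambda\in(0,L]$ produces $q=s\lambda\in(0,q_{\max}]$. If $q=1$, which can only occur when $q_{\max}=1$ and $\lambda=L$, the limiting matrix is nilpotent and therefore Schur stable. Every $q\in(0,q_{\max}]\cap(0,1)$ is attained, for instance by the $1\times1$ matrix $Q=\lambda$ with $\lambda=q/s\le L$. By \eqref{eq:exact-modal}, statement (i) is therefore equivalent to
\[
\frac{\beta}{\sqrt s}<\phi(q)\quad\text{for every }q\in(0,q_{\max}]\cap(0,1).
\]

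\textbf{Step 2 (study of $\phi$).} Differentiate $\phi(q)=\frac{4-3q}{2q(1-q)}$. The numerator of $\phi'$, up to a positive factor, is
\[
-3q(1-q)-(4-3q)(1-2q)=3q^2-8q+4=(3q-2)(q-2).
\]
Hence on $(0,1)$ the function $\phi$ decreases on $(0,2/3]$ and increases on $[2/3,1)$, with minimum $\phi(2/3)=\frac{2}{2\cdot\frac23\cdot\frac13}=\frac92$. We also have $\phi\to+\infty$ as $q\to0^+$ and as $q\to1^-$.

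\textbf{Step 3 (infimum and strictness).} If $q_{\max}\le 2/3$, the infimum of $\phi$ over $(0,q_{\max}]$ is $\phi(q_{\max})$, and it is attained. If $2/3\le q_{\max}\le1$, the admissible set contains $2/3$, so the infimum over $(0,q_{\max}]\cap(0,1)$ is $9/2$, again attained. In the case $q_{\max}=1$ the point $q=1$ is excluded but harmless by Step 1. Because the infimum is attained in both cases, the family of strict inequalities in Step 1 is equivalent to the single strict inequality $\beta/\sqrt s<T(q_{\max})$. This attainment is the only delicate point, since a merely approached infimum would turn the condition into a non-strict one. The two formulas for $T$ agree at $q_{\max}=2/3$.

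\textbf{Step 4 (final claim).} Under (ii), every fixed $Q$ satisfies \eqref{eq:fixed-quadratic}, because the minimum there is taken over a subset of the admissible $q$ and is therefore at least $T(q_{\max})$. Proposition~\ref{prop:fixed-quadratic} then applies to every such $Q$.
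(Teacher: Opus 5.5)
Your proof is correct and follows the paper's argument. You reduce (i) via \eqref{eq:exact-modal} to $\beta/\sqrt{s}<\phi(q)$ for every attainable $q\in(0,q_{\max}]\cap(0,1)$, with the nilpotent mode $q=1$ harmless; if the dimension $n$ is fixed, realize $q$ by $Q=(q/s)I$ as the paper does rather than by a $1\times1$ matrix. You then use that $\phi$ attains its minimum $9/2$ at $q=2/3$, and your observation that the infimum is attained, so the strict inequality is preserved, is a point the paper leaves implicit.

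One arithmetic slip: $-3q(1-q)-(4-3q)(1-2q)=-3q^2+8q-4=(3q-2)(2-q)$, not $(3q-2)(q-2)$. With your sign, $\phi$ would increase and then decrease on $(0,1)$. The monotonicity you actually state is the correct one, and it also follows from $\phi\to+\infty$ at both ends of $(0,1)$ together with the single critical point $q=2/3$ there.
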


\begin{proof}
For $q=s\lambda\in(0,1)$, condition \eqref{eq:exact-modal} is equivalent to $
\frac{\beta}{\sqrt{s}}<\phi(q).
$
The endpoint $q=1$ is Schur stable without any condition on $\beta$. Every $q\in(0,q_{\max}]$ occurs in the class as $q=s\lambda$ for a
positive eigenvalue $\lambda$; for example, take $Q=(q/s)I$. Hence statement
\textup{(i)} is equivalent
to
$$
\frac{\beta}{\sqrt{s}}
<\inf_{\substack{0<q\leq q_{\max}\\q<1}}\phi(q).
$$
Differentiation gives
$$
\phi'(q)=
\frac{(3q-2)(2-q)}{2q^2(1-q)^2}.
$$
Thus $\phi$ decreases on $(0,2/3]$ and increases on $[2/3,1)$. Its minimum
is $\phi(2/3)=9/2$. Evaluation of the infimum gives
\eqref{eq:uniform-quadratic}. Proposition~\ref{prop:fixed-quadratic} gives the
last assertion.
\end{proof}

Proposition~\ref{prop:uniform-quadratic} gives a condition on $\beta$ that is uniform in $Q$. It does not give geometric constants that are uniform in
$Q$. These constants may also depend on the initial pair. In particular, a contraction factor strictly below one cannot be uniform when positive
eigenvalues may approach zero.

We now compare the four conditions in the same variables. Set $q=q_{\max}=sL$ and define
$$
\Psi(q):=\frac{1+\sqrt{1+q(1-q)}}{q}.
$$
In terms of $\beta/\sqrt{s}$, the original condition, the simpler sufficient
condition, and the refined Lyapunov condition are, respectively,
$$
\frac{\beta}{\sqrt{s}}<2,
\qquad
\frac{\beta}{\sqrt{s}}<\frac{2}{q},
\qquad
\frac{\beta}{\sqrt{s}}<\Psi(q).
$$
The spectral condition is $\beta/\sqrt{s}<T(q)$. For $0<q\leq2/3$, the
inequality $T(q)>\Psi(q)$ reduces to
$$
\frac{2-q}{2(1-q)}>\sqrt{1+q(1-q)}.
$$
Both sides are positive, and the difference of their squares is
$$
\frac{(2-q)^2}{4(1-q)^2}-\bigl(1+q(1-q)\bigr)
=\frac{q^2(3-2q)^2}{4(1-q)^2}>0.
$$
For $2/3\leq q\leq1$, differentiation gives
$$
\Psi'(q)=\frac{1}{q^2}
\left(
\frac{q(1-2q)}{2\sqrt{1+q(1-q)}}
-1-\sqrt{1+q(1-q)}
\right)<0.
$$
It follows that
$$
\Psi(q)\leq\Psi(2/3)
=\frac{3+\sqrt{11}}{2}<\frac92=T(q).
$$
Also, $2\leq2/q\leq\Psi(q)$. We have proved that
$$
2\leq\frac2q\leq\Psi(q)<T(q),
\qquad 0<q\leq1.
$$

{Thus, on convex quadratic functions, the spectral condition
extends the parameter range in which Proposition~\ref{prop:fixed-quadratic}
gives convergence of the iterates and geometric estimates.}

The bound $\beta/\sqrt{s}<T(q)$ is necessary and sufficient for Schur
stability of all the limiting matrices in the quadratic class. This is the
only necessity statement proved here. We do not claim that this bound is
necessary for convergence of the IGAHD recurrence with variable coefficients.
This restriction also applies to the boundary case, where the limiting
characteristic polynomial has a root on the unit circle.
}
%=====================================================
\section{Boundary cases and scope}\label{sec:scope}
%=====================================================
\paragraph{The case $\beta=0$.}
Theorem~\ref{thm:main} is stated for $\beta>0$ because it concerns the
Hessian-damped regime. When $\beta=0$, the accelerated objective estimate
follows from the standard argument. If $s<1/L$, then
$(\delta-1)^2=1<C_q$, so the refined calculation also gives the weighted
gradient summability. At $s=1/L$, the limit matrix is positive semidefinite
but not positive definite, and the coercivity argument used here does not
apply.

{
\paragraph{The boundary of the Lyapunov condition.}
When $(\delta-1)^2=C_q$, the matrix $M_\infty$ is singular. The present
Lyapunov argument then loses uniform coercivity. This does not imply
divergence on the boundary, and it does not exclude a proof based on another
estimate. In the regime $\delta>0$ of Theorem~\ref{thm:main}, equality
corresponds to the upper branch $\delta=1+\sqrt{C_q}$. The determinant
condition used in the proof then fails for every $\tau>1$. This case is
different from the degenerate case $\delta=0$, $q=1$.
}

{
\paragraph{The spectral threshold.}
For the quadratic class, Proposition~\ref{prop:uniform-quadratic} gives a
necessary and sufficient condition for Schur stability of all limiting modal
matrices. This statement concerns the matrices obtained by passing to the
limit in the coefficients of the recurrence. It does not give a necessary
condition for convergence of the original recurrence, whose coefficients
depend on $k$. On the boundary of the uniform condition, the quadratic class
contains a mode whose limiting matrix has an eigenvalue of modulus one. The
contraction argument used in Proposition~\ref{prop:fixed-quadratic} does not
apply to this mode. A separate argument would be needed to decide convergence
or divergence in this case.
}

\paragraph{Eventual monotonicity.}
The determinant condition holds for all sufficiently large
$\tau_{k+1}$. Consequently, the energy is eventually nonincreasing. This is
enough for the asymptotic estimates because the preceding finite set of
indices is absorbed into the constants. The proof of
\cite[Theorem~6]{AFCR2022} likewise establishes its
determinant condition for sufficiently large indices and uses
$t_{k+1}-1\geq0$. We therefore state only the eventual monotonicity that
follows directly from these Lyapunov calculations; no claim of monotonicity
from the initial index is made here.

{
\paragraph{Lack of uniformity as $s\to0$.}
For each fixed $\beta>0$, the refined condition holds when $s>0$ is
sufficiently small. The Lyapunov estimates are not uniform in this limit.
Indeed, $q=sL\to0$ and $\delta=\beta L\sqrt{s}\to0$, so
$$
M_\infty
=\begin{pmatrix}1&\delta-1\\ \delta-1&1+q(1-q)\end{pmatrix}
\longrightarrow
\begin{pmatrix}1&-1\\ -1&1\end{pmatrix},
$$
which is singular. Hence the coercivity constant in
Corollary~\ref{cor:coercivity} degenerates. The theorem gives a valid sufficient condition for every fixed sufficiently small $s$, but it is not a
uniform-in-$s$ discretization result.
}

{
\paragraph{Scope and attribution.}
The algorithm, the scaling factors $\tau_k$, the auxiliary vector $v_k$, the
Lyapunov function, and the cancellation identity are due to
\cite{AFCR2022}. The Lyapunov refinement consists in retaining the exact
coefficients in one standard smooth convex estimate. It gives a less
restrictive sufficient condition for the conclusions of the original
Lyapunov analysis. No claim of necessity or optimality is made on the full
class of smooth convex functions.

}
%=====================================================
\section{Conclusion}\label{sec:conclusion}
%=====================================================
The algorithm, auxiliary sequence, Lyapunov function, and cancellation identity considered here are taken from \cite{AFCR2022}. The part of this note devoted to Lyapunov analysis refines an estimate found in their proof. Retaining two exact coefficients instead of replacing them with $s/2$ gives the less restrictive sufficient condition
$$
0<\beta L\sqrt{s}<1+\sqrt{1+sL(1-sL)}
$$
within the same Lyapunov construction. The simpler condition $0<\beta L\sqrt{s}<2$ is sufficient. Thus, for a fixed $\beta>0$, the refined condition holds for any sufficiently small step size. 

For $\alpha>3$, the convergence arguments from \cite{ACFR2023} also apply under the refined condition. In particular, $(x_k)$ converges weakly to a point in $X^\star$.

In the case of convex quadratic objective functions in finite dimensions, spectral analysis extends the convergence result of \cite[Theorem 4.3]{ACFR2023} to the broader parameter domain \eqref{eq:uniform-quadratic}, including the case $\alpha=3$. It also establishes geometric decay of the objective function residual and the gradients. The statement regarding necessity in this analysis concerns only the Schur stability of the limiting modal matrices.

This analysis raises two natural questions. The first is to determine the behavior of the non-autonomous quadratic recurrence at the boundary of the Schur modal domain. The second is whether a different Lyapunov function would allow recovering part (or the whole) of this expanded spectral domain for non-quadratic objective functions.

%======================

%=====================================================
\section*{Acknowledgements}
%=====================================================
The author gratefully acknowledges support from the Math AmSud project
N\textdegree~51756TF (VIPS), ECOS Project C24E06, and the FMJH Gaspard Monge
Program for Optimization and Data Science.
\end{document}